\long\def\eatit#1{}
\newtheorem{thm}{Theorem}[section]
\newtheorem{lem}[thm]{Lemma}
\newtheorem{conj}[thm]{Conjecture}
\theoremstyle{definition}
\newcommand{\drawingOne}{1}
\newcommand{\pr}[1]{{{\bf P}^{#1}}}
\newcommand{\field}{\mathbb K}
\newcommand{\reg}{\operatorname{reg}}
\newcommand{\ha}[1]{\widehat{\alpha}(#1)}
\newcommand{\depth}{\mathop{\mathrm{depth}}\nolimits}
\newcommand{\pd}{\mathop{\mathrm{pd}}\nolimits}
\newcommand\hatrho{\widehat{\rho}}
\begin{document}
\title[Resurgences for ideals of special point configurations in $\pr N$]{Resurgences
for ideals of special point configurations in $\pr N$
coming from hyperplane arrangements}

\author{M.\ Dumnicki}
\address{Jagiellonian University\\
Institute of Mathematics\\
{\L}ojasiewicza 6\\
30-384 Krak\'ow}
\email{Marcin.Dumnicki@im.uj.edu.pl}
\author{B.\ Harbourne}
\address{Department of Mathematics\\
University of Nebraska\\
Lincoln, NE 68588-0130 USA}
\email{bharbour@math.unl.edu}
\author{U.\ Nagel}
\address{Department of Mathematics\\
University of Kentucky\\
715 Patterson Office Tower\\
Lexington, KY 40506-0027 USA}
\email{uwe.nagel@uky.edu}
\author{A.\ Seceleanu}
\address{Department of Mathematics\\
University of Nebraska\\
Lincoln, NE 68588-0130 USA}
\email{aseceleanu@unl.edu}
\author{T.\ Szemberg}
\address{Instytut Matematyki UP\\
Podchor\c a\.zych 2\\
PL-30-084 Krak\'ow, Poland}
\email{tomasz.szemberg@uni-due.de}
\author{H.\ Tutaj-Gasi\'nska}
\address{Jagiellonian University\\
Institute of Mathematics\\
{\L}ojasiewicza 6\\
PL-30-348 Krak\'ow}
\email{htutaj@im.uj.edu.pl}

\thanks{The second author's work on this project
was sponsored by the National Security Agency under Grant/Cooperative
agreement ``Topics in Algebra and Geometry, Curves and Containments'' Number H98230-13-1-0213.
The United States Government is authorized to reproduce and distribute reprints
notwithstanding any copyright notice. The work of the third author was partially  
supported by the National Security Agency under Grant 
Number H98230-12-1-0247. The fifth author's research was partially supported
by NCN grant UMO-2011/01/B/ST1/04875.}

\begin{abstract}
Symbolic powers of ideals have
attracted interest in commutative algebra and algebraic geometry for
many years, with a notable recent focus on containment relations between
symbolic powers and ordinary powers; see for
example \cite{refBH, refCu, refELS, refHaHu, refHoHu, refHu1, refHu2}
to cite just a few. Several invariants have been introduced and studied in the latter context,
including the resurgence and asymptotic resurgence
\cite{refBH,refGHvT}.

There have been exciting new developments in this area recently.
It had been expected for several years that $I^{Nr-N+1}\subseteq I^r$
should hold for the ideal $I$ of any finite set of points in $\pr N$ for all $r>0$,
but in the last year various counterexamples have now been constructed
(see \cite{refDST,refHS,RefC. et al}), all involving point sets
coming from hyperplane arrangements.
In the present work, we compute their resurgences and obtain in particular the
first examples where the resurgence and the asymptotic resurgence are not equal.
\end{abstract}

\date{April 19, 2014}

\subjclass[2000]{Primary: 13F20, 
Secondary:  13A02, 
14N05} 

\keywords{symbolic powers, fat points, homogeneous ideals, polynomial rings, projective space}

\maketitle

\section{Introduction}
In commutative algebra, ideals are major objects of interest, often given directly
by specifying generators. Ideals are also important objects
of study in algebraic geometry, but the ideals are specified indirectly, often in terms of
vanishing conditions. Thus in commutative algebra it is quite natural to study
the behavior of powers of ideals, but in algebraic geometry it is more natural to
study symbolic powers. For example, given a finite set $S\subset {\bf P}^N$ of points
in projective space (over a field $\field$), we have the polynomial ring
$R=\field[{\bf P}^N]$
in $N+1$ variables over $\field$. The ideal $I_S\subseteq \field[{\bf P}^N]=R$
is the ideal generated by all homogeneous polynomials (i.e., forms)
vanishing on $S$. If $\mathcal{I}_S$ is the ideal sheaf on ${\bf P}^N$
corresponding to $I_S$, then the $m$th symbolic power $I_S^{(m)}$ is
canonically isomorphic to $\bigoplus_t H^0({\bf P}^N,\mathcal{I_S}^m(t))$.
Alternatively, $I_S^{(m)}$ is generated by
all forms vanishing to order at least $m$ at each point of $S$; i.e.,
if $S=\{p_1,\ldots,p_s\}$, then $I_S=\cap_iI_{p_i}$ and $I_S^{(m)}=\cap_iI_{p_i}^m$.
The precise relationship between $I_S^m$ and $I_S^{(m)}$ is that
$I_S^m=I_S^{(m)}\cap Q$ where $Q$ is primary for
the irrelevant ideal $M\subset \field[{\bf P}^N]$ (i.e., the maximal
homogeneous ideal, this being the one generated by the variables of the polynomial ring
$\field[{\bf P}^N]$). Algebraically, taking powers of an ideal can introduce
adventitious primary components; recovering the symbolic power from the ordinary power
requires removing these adventitious components. This leads to the general definition
of symbolic power, namely the $m$th symbolic power $I^{(m)}$ of an ideal $I\subseteq R$
is defined to be
$I^{(m)}=R\cap(\cap_{P\in\operatorname{Ass}(I)}I^mR_P)$
(where the intersection takes place in $R_{(0)}$).

It is immediately apparent from this discussion that one always has
$I_S^m\subseteq I_S^{(m)}$. There are sets of points $S$ for which
all powers of $I_S$ are symbolic (i.e., such that
$I_S^m=I_S^{(m)}$ holds for all $m>0$), but it is an open problem to
characterize those $S$ with this property, and there are also easy examples
of $S$ where equality sometimes fails, so nontrivial $M$-primary components
$Q$ really do occur.

When $I_S^r\subsetneq I_S^{(r)}$, it is at least true
for $m$ sufficiently large (such as for $m$ greater than or
equal to the maximum of $r$ and the saturation degree of $I_S^r$) that we have
$I_S^{(m)}\subseteq I_S^r$, but it is much less obvious what the least such $m$ is.
A quantity known as the \emph{resurgence} was introduced in \cite{refBH}
to study this issue.
Let $(0)\neq I\subsetneq R=\field[{\bf P}^N]$ be a homogeneous ideal.
Then the resurgence $\rho(I)$ of $I$ is defined to be
$$\rho(I)=\sup\Big\{\frac{m}{r}:I^{(m)}\not\subseteq I^r\Big\}.$$
Its asymptotic version $\hatrho(I)$ is defined as
$$\hatrho(I)=\sup\Big\{\frac{m}{r}:I^{(mt)}\not\subseteq I^{rt}\;\mbox{ for } t\gg 0\Big\}.$$
It is immediate that
$$\hatrho(I)\leq \rho(I).$$
Whereas it might be expected that these two invariants differ, no examples
of ideals where this actually happens have been known up to now.
In this note we compute examples showing that a strict inequality between these two invariants
can occur.

A priori it seems possible that $\rho(I)$ could be infinite.
However, given $r\geq1$, a fundamental result of \cite{refHoHu, refELS}, is that
\begin{equation}\label{ELSHH}
I^{(m)}\subseteq I^r\text{ for $m\geq Nr$ for all homogeneous ideals $I\subseteq \field[{\bf P}^N]$. }
\end{equation}
This shows that $\rho(I)\leq N$ for nontrivial ideals $I$.
   On the other hand for a nontrivial ideal $I$ we have always $\hatrho(I)\geq 1$
   by \cite[Theorem 1.2]{refGHvT}.

No examples are known for which $\rho(I)=N$, but examples from
\cite{refBH} show that ideals $I$ can be given with $\rho(I)$ arbitrarily close to $N$.
Thus no expression of the form $m>cr$ for constant $c<N$ can ensure containment
$I^{(m)}\subseteq I^r$ for all homogeneous ideals $I\subseteq R$ and all $r$.
This still leaves open the question of
whether there are lower bounds on $m$ smaller than $Nr$ guaranteeing
containment $I^{(m)}\subseteq I^r$ for all $I$ and $r$.

For example, if $I$ is an ideal of points in ${\bf P}^2$, then we have
$I^{(2r)}\subseteq I^r$ and hence $I^{(4)}\subseteq I^2$.
C.\ Huneke asked if $I^{(3)}\subseteq I^2$ also always holds for
ideals $I$ of finite sets of points in the plane.
This led to the following (now known to be false)
conjecture of the second author \cite{refB. et al}
as a possible improvement on (\ref{ELSHH}):

\begin{conj}\label{Bconj}
The containment $I^{(rN-(N-1))}\subseteq I^r$
holds for all homogeneous ideals in $\field[{\bf P}^N]$.
\end{conj}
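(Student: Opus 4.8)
\medskip

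We outline the natural approach to Conjecture~\ref{Bconj} and explain why --- as the present paper shows --- it cannot succeed. The plan is to try to sharpen the Ein--Lazarsfeld--Smith/Hochster--Huneke containment~(\ref{ELSHH}) by the fixed linear amount $N-1$. The case $r=1$ is trivial, and the decisive case --- the one behind Huneke's question and all the surrounding work --- is that of the ideal $I=I_S$ of a finite set of points $S\subset\P^N$, where $I_S^{(m)}=\bigcap_{p\in S}I_p^{m}$ is the ideal of forms vanishing to order $\ge m$ at every point of $S$. For such ideals the containment $I_S^{(m)}\subseteq I_S^{r}$ is governed by the resurgence: it would suffice to prove the single bound $\rho(I_S)<\tfrac{N+1}{2}$, which by the definition of the resurgence already forces $I_S^{(rN-(N-1))}\subseteq I_S^{r}$ for every $r$. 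Short of that, one argues degree by degree, balancing the growth of the initial degrees $\alpha\big(I_S^{(m)}\big)$ (the least degrees of nonzero forms in $I_S^{(m)}$), which satisfy $\alpha\big(I_S^{(m)}\big)\ge m\,\ha{I_S}$, against a linear bound on $\reg(I_S^{r})$, along the lines of \cite{refBH}.

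First I would settle the small cases and Huneke's question $I_S^{(3)}\subseteq I_S^{2}$ in $\P^2$ directly, using B\'ezout-type inequalities together with the decomposition $I_S^{2}=I_S^{(2)}\cap Q$ with $Q$ an $M$-primary ideal. Then I would collect the ``tame'' families in which a soft argument works: points in general position, where $\alpha\big(I_S^{(m)}\big)$ attains its largest possible value --- of order $m\sqrt{|S|}$ in $\P^2$, by Nagata/Chudnovsky-type estimates for the Waldschmidt constant $\ha{I_S}$ --- so that $\rho(I_S)$ is close to $1$; complete intersections and, more generally, points lying on a few hypersurfaces of small degree, where $\rho(I_S)=1$; and squarefree monomial ideals, which are accessible through the combinatorics of the symbolic Rees algebra. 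The final, decisive step would be to reduce an \emph{arbitrary} configuration $S$ to one of these, typically via a flat degeneration of $S$ to a monomial-like scheme or to a configuration supported on a few hyperplanes, keeping $I_S^{(m)}$ and $I_S^{r}$ under control along the way.

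That last step is the main obstacle, and it is fatal: Conjecture~\ref{Bconj} is \emph{false}, and the counterexamples are precisely the configurations studied in this paper, which do \emph{not} degenerate to the tame ones because their symbolic powers are anomalously large in low degree. Indeed, if $S$ is the singular locus of a hyperplane arrangement $\mathcal{A}$ consisting of $t$ hyperplanes, every point of $S$ lying on at least $k$ of them, then the defining form $F_{\mathcal{A}}$ of $\mathcal{A}$ has degree $t$ and vanishes to order at least $k$ along $S$, so that $F_{\mathcal{A}}\in I_S^{(k)}$. For the Fermat-type arrangements and the (dual) Hesse arrangement of \cite{refDST,refHS,RefC. et al} there is a suitable $r$ --- one may take $r=2$ in $\P^2$ --- for which $k\ge rN-(N-1)$ while $t<r\,\alpha(I_S)=\alpha(I_S^{r})$, so that $F_{\mathcal{A}}$ is of too small a degree to belong to $I_S^{r}$; hence $F_{\mathcal{A}}\in I_S^{(rN-(N-1))}\setminus I_S^{r}$, which contradicts the conjecture and yields $\rho(I_S)\ge N-\tfrac{N-1}{r}$. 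The purpose of this work is therefore not to prove Conjecture~\ref{Bconj} but to measure how badly it fails, by computing $\rho(I_S)$ and $\hatrho(I_S)$ for these configurations --- and, in so doing, to exhibit the first ideals with $\hatrho(I_S)<\rho(I_S)$.
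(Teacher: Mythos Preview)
The statement is a \emph{conjecture}, and the paper does not attempt to prove it; on the contrary, the paper records that Conjecture~\ref{Bconj} is now known to be false and cites the counterexamples of \cite{refDST,refBCH,refHS,RefC. et al}. You correctly recognize this and correctly identify the general mechanism behind the counterexamples: the product $F_{\mathcal{A}}$ of the linear forms of a suitable hyperplane arrangement lies in a high symbolic power of the ideal of its singular locus but not in the corresponding ordinary power.

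However, the one concrete step you offer --- the reason why $F_{\mathcal{A}}\notin I_S^{r}$ --- is wrong. You assert that ``$t<r\,\alpha(I_S)=\alpha(I_S^{r})$'', so that $F_{\mathcal{A}}$ has degree strictly below $\alpha(I_S^{r})$. In every example treated in the paper this inequality fails. For the dual Hesse configuration of \cite{refDST} (the case $n=3$ of Theorem~\ref{thm:fermat configurations}) one has $t=9$ lines and $\alpha(I_S)=4$, so $r\,\alpha(I_S)=8<9$; for the real configuration of \cite{RefC. et al} (Theorem~\ref{realExample}) one has $t=12$ and $\alpha(I_S)=5$, so $r\,\alpha(I_S)=10<12$; and for the Fermat arrangements with $n\ge 3$ one has $t=3n\ge 2n+2=r\,\alpha(I_S)$. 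In each case $\deg F_{\mathcal{A}}$ lies \emph{above} $\alpha(I_S^{2})$, and a pure degree comparison cannot exclude membership in $I_S^{2}$. The paper does not reprove the non-containment here: for the Fermat configurations it simply invokes \cite{refHS}, and for the configuration of \cite{RefC. et al} it is checked by computer (as noted just before Theorem~\ref{realExample}). So while your overall diagnosis --- the conjecture is false, and the paper's purpose is to compute $\rho$ and $\hatrho$ for the counterexamples rather than to prove the conjecture --- is exactly right, the argument you sketch for the key non-containment does not actually go through.
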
 

The containment of Conjecture \ref{Bconj}
does indeed hold for many ideals $I$ for many $r$ and $N$
(see for example, \cite{refBCH, refB. et al, refHaHu}), including 
for ideals of finite sets of general points when $N=2,3$ \cite{refBH,refD},
but it is now known that failures can occur. The first failure found is that of
\cite{refDST} showing that $I^{(3)}\not\subseteq I^2$ occurs for the ideal of a certain
configuration of twelve points in ${\bf P}^2$ over the field $\field={\bf C}$ of complex numbers.
These twelve points are dual to the twelve lines meeting a smooth plane cubic curve
only at the flex points of the cubic, and thus have the combinatorially
interesting property of there being nine lines passing through
subsets of exactly four of the twelve points, and for each of the twelve
points there is a subset of exactly three of the nine lines which vanish at the point.
Any twelve of the 13 points of ${\bf P}^2$ over the finite field $\field$ of three elements
also have this same combinatorial structure, and the ideal $J$ of these points
also has $J^{(3)}\not\subseteq J^2$ (see \cite{refBCH};
for additional counterexamples to
Conjecture \ref{Bconj}, for various values of $N$ and $r$, see \cite{refHS}).
However, the resurgences distinguish the two ideals; indeed,
$\rho(I)=3/2$ and $\hatrho(I)=4/3$, while
$\rho(J)=\hatrho(J)=5/3$ (see Theorem \ref{thm:fermat configurations}
and \ref{mainThmSec3}).

Recently a new counterexample with $N=r=2$
has been announced \cite{RefC. et al}, which can be constructed over the rationals (see
Figure \drawingOne).
Its combinatorial structure is different from those of \cite{refDST, refBCH} mentioned above,
and the asymptotic resurgence is different for all three,
but interestingly, its resurgence turns out to be the same as that of
\cite{refDST} (see
Theorem \ref{thm:fermat configurations} and Theorem \ref{realExample}).
The asymptotic resurgence, surprisingly, thus is
perhaps a more sensitive invariant for
differentiating between various counterexamples.

The goal of this note is to compute $\rho(I)$ and $\hatrho(I)$ for various ideals $I$
giving counterexamples
to Conjecture \ref{Bconj} for ideals of points in ${\bf P}^N$, including those of
\cite{refDST, refBCH,RefC. et al} and some of those of \cite{refHS}.

\section{Results specific to the plane}

Up to choice of coordinate variables $x,y$ and $z$ on ${\bf P}^2$, the ideal $I$
of \cite{refDST} for which $I^{(3)}\not\subseteq I^2$
can be taken to be $I=(x(y^3 - z^3), y(z^3 - x^3), z(x^3 - y^3))$.
More generally, for $n\geq3$
and $\field$ any field of characteristic not equal to 2 but containing $n$
distinct roots of 1, then $I^{(3)}\not\subseteq I^2$ holds for the ideal
$I=(x(y^n - z^n), y(z^n - x^n), z(x^n - y^n))\subset \field[x,y,z]$ (see \cite{refHS});
we note that $I$ is the ideal of a certain very special set of $n^2+3$ points of ${\bf P}^2$,
these being the three coordinate vertices in addition to a complete intersection of $n^2$ points.
We begin by computing the resurgence of these ideals.

To this end it is useful to recall Waldschmidt's constant. For a homogeneous ideal
$(0)\neq J\subsetneq R=\field[{\bf P}^N]$, Waldschmidt's constant
$\ha{J}$ is defined to be the following limit:
\begin{equation}\label{eq:waldschmidt}
\ha{J}=\lim_{m\to\infty}\frac{\alpha(J^{(m)})}{m}=\inf_{m\geq 1}\frac{\alpha(J^{(m)})}{m},
\end{equation}
where $\alpha(J^{(m)})$ is the least degree of a nonzero homogeneous
element of $J^{(m)}$. (The existence of the limit and
the equality to the infimum follows from sub-additivity of $\alpha$;
see \cite[Lemma 2.3.1]{refBH}.)

The connection between the various invariants has been discussed in \cite[Theorem 1.2]{refGHvT}. In particular we have
\begin{equation}\label{eq:lower bound on hatrho}
\frac{\alpha(I)}{\ha{I}}\leq \hatrho(I)\leq \rho(I).
\end{equation}

We are now in position to prove our first main result.

\begin{thm}\label{thm:fermat configurations}
   Let $I=(x(y^n - z^n), y(z^n - x^n), z(x^n - y^n))\subset R=\field[x,y,z]$
where $n\geq3$
and $\field$ is any field of characteristic not equal to 2 containing $n$
distinct roots of 1. Then
   $$\hatrho(I)=\frac{n+1}{n} \;\mbox{ and }\; \rho(I) = 3/2.$$
\end{thm}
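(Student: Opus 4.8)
The plan is to compute $\hatrho(I)$ and $\rho(I)$ separately, using the chain of inequalities in (\ref{eq:lower bound on hatrho}) together with the known containment failure $I^{(3)}\not\subseteq I^2$ and explicit constructions of forms witnessing non-containments.

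\textbf{Step 1: Identify the geometry and compute $\alpha(I)$, $\alpha(I^{(m)})$ and $\ha I$.} First I would make precise the description of $V(I)$ as the $n^2+3$ points consisting of the three coordinate vertices together with the complete intersection of $x^n-y^n$, $y^n-z^n$ (equivalently the points $[1:\varepsilon:\varepsilon']$ with $\varepsilon^n=\varepsilon'^n=1$). The generators have degree $n+1$, so $\alpha(I)=n+1$ provided no form of smaller degree vanishes at all $n^2+3$ points; this should follow from a dimension count, since $n^2+3$ general-position-enough points impose independent conditions on forms of degree $<n+1$. For the symbolic powers, the key is to produce, for each $m$, a form in $I^{(m)}$ of low degree: the product of the $n$ lines $x-\zeta y$ (for $\zeta^n=1$), the $n$ lines $y-\zeta z$, and the $n$ lines $x-\zeta z$ is a degree-$3n$ form vanishing to order $3$ at each of the $n^2$ complete-intersection points (three of the $3n$ lines pass through each) and to order $n$ at each coordinate vertex. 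Taking the right power/combination of such configurations, one gets $\alpha(I^{(3k)})\le 3nk$ roughly, hence $\ha I\le n$; matching this with a lower bound (for instance via the multiplicity/volume bound coming from the $n^2$ points in linear general position forcing $\ha{}\ge n$, or a direct argument) gives $\ha I=n$. Then (\ref{eq:lower bound on hatrho}) yields $\hatrho(I)\ge \alpha(I)/\ha I=(n+1)/n$.

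\textbf{Step 2: Upper bound $\hatrho(I)\le (n+1)/n$ and hence equality.} For the asymptotic resurgence I would use the characterization/estimates from \cite{refGHvT}, in particular that $\hatrho(I)$ is governed by the comparison of $\alpha(I^{(m)})$ with $m\cdot\ha I$ and with $\reg$ of powers, or more directly the bound $\hatrho(I)\le \omega(I)/\ha I$ where $\omega(I)$ is the largest degree of a minimal generator of $I$; here $\omega(I)=n+1=\alpha(I)$, so both bounds coincide and force $\hatrho(I)=(n+1)/n$. I would spell out that since all three generators have the same degree $n+1$, the ideal is generated in a single degree, which is exactly the case where the lower and upper bounds in terms of $\alpha$ and $\omega$ pinch together.

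\textbf{Step 3: Compute $\rho(I)=3/2$.} The lower bound $\rho(I)\ge 3/2$ is immediate from $I^{(3)}\not\subseteq I^2$ (proved in \cite{refHS}, which I may assume). For the upper bound I must show $I^{(m)}\subseteq I^r$ whenever $m/r>3/2$, i.e.\ whenever $m\ge \lfloor 3r/2\rfloor+1$; equivalently, $2m\ge 3r+1$, i.e.\ $2m> 3r$. The standard approach is: (i) for $r$ even, $r=2s$, reduce to $I^{(3s)}\subseteq I^{2s}=(I^2)^s$ via $I^{(3s)}\subseteq (I^{(3)})^s$... but that last containment is false in general, so instead I would argue through Waldschmidt's constant and regularity: use the criterion that $I^{(m)}\subseteq I^r$ holds once $\alpha(I^{(m)})\ge r\cdot\reg(I)$ (or the sharper $m\ge \big(\reg(I^r)\big)$-type bound from \cite{refBH}), combined with $\alpha(I^{(m)})\ge m\cdot\ha I=mn$ and a computation of $\reg(I^r)$. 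One expects $\reg(I)$ and more generally the regularity of powers of $I$ to be controlled by the degree-$3n$ complete-intersection-of-lines forms, giving $\reg(I^r)\le \tfrac{3n}{2}r + O(1)$; feeding this into the containment criterion yields $I^{(m)}\subseteq I^r$ for $mn \gtrsim \tfrac{3n}{2}r$, i.e.\ $m/r>3/2$. Care is needed with the additive constants, so for small $r$ (and for the borderline cases $2m=3r+1$) I would supplement with explicit membership arguments, writing the relevant generator of $I^{(m)}$ as an explicit product of the forms $x(y^n-z^n)$, etc., and the line-forms.

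\textbf{Main obstacle.} The delicate part is the \emph{upper} bound $\rho(I)\le 3/2$: unlike the asymptotic statement, this is not purely a limit computation, so one cannot merely invoke Waldschmidt's constant but must control $\reg(I^r)$ (or directly exhibit containments) with exact constants, and handle the boundary ratios $m/r$ just above $3/2$ by hand. I expect the proof to hinge on an explicit description of the generators of $I^{(m)}$ in terms of powers of the linear forms cutting out the $3n$ special lines and the coordinate-vertex lines, together with a careful ``distribution of vanishing'' argument showing every such generator lies in $I^r$ when $2m>3r$. The $\hatrho$ computation, by contrast, should fall out cleanly from (\ref{eq:lower bound on hatrho}), the single-degree generation of $I$, and the evaluation $\ha I = n$.
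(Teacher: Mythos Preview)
Your strategy is the paper's strategy: compute $\ha I=n$ (via the containment $I\subseteq(y^n-z^n,\,z^n-x^n)$, a complete intersection of $n^2$ points, exactly your ``$n^2$ points'' lower bound), get $\hatrho(I)=(n+1)/n$ from the squeeze $\alpha(I)/\ha I\le\hatrho(I)\le\omega(I)/\ha I$ with $\alpha(I)=\omega(I)=n+1$, take $\rho(I)\ge 3/2$ from the known failure $I^{(3)}\not\subseteq I^2$, and obtain $\rho(I)\le 3/2$ via the criterion $\alpha(I^{(m)})>\reg(I^r)\Rightarrow I^{(m)}\subseteq I^r$.

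Where you are vague is precisely where the paper is concrete, and this matters because your anticipated ``explicit membership'' fallback is not needed. The two inputs you are missing are: (i) $\reg(I)=2n-1$, read off from the Hilbert--Burch resolution $0\to R(-2n)\oplus R(-n-3)\to R(-n-1)^3\to I\to 0$ (the three generators are the $2\times 2$ minors of the matrix with rows $(xy,\,xz,\,yz)$ and $(z^{n-1},\,y^{n-1},\,x^{n-1})$); and (ii) Chardin's bound $\reg(I^r)\le 2\reg(I)+(r-2)\omega(I)=r(n+1)+2(n-2)$. With these, $m/r>3/2$ and $r\ge 4$ give $\alpha(I^{(m)})\ge mn>3rn/2\ge r(n+1)+2(n-2)\ge\reg(I^r)$ directly (the middle inequality is equivalent to $r\ge 4$). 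For $r=3$ the same inequality with $m=5$ reads $5n>5n-1$, so $I^{(5)}\subseteq I^3$; for $r=2$ one just invokes $I^{(4)}\subseteq I^2$ from \eqref{ELSHH}. No generator-by-generator analysis of $I^{(m)}$ is required. Your guess $\reg(I^r)\le\tfrac{3n}{2}r+O(1)$ is in fact pessimistic: the slope is $n+1<3n/2$, which is why only finitely many small $r$ need separate treatment.
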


\begin{proof}
Since $I^{(3)}\not\subseteq I^2$ by \cite{refHS}, we have $3/2\leq \rho(I)$.
We will show that also $\rho(I)\leq 3/2$ and hence $\rho(I)=3/2$.
From \cite[Lemma 2.3.4]{refBH} we know that
   $\alpha(I^{(m)}) > \reg(I^r)$ implies $I^{(m)}\subseteq I^r$. But $\alpha(I^{(m)}) \geq m\ha{I}$ by \eqref{eq:waldschmidt}
and we will show momentarily that $\ha{I}=n$. By
\cite[Theorem 1.7.1]{refC1} or \cite[Theorem 0.5]{refC2}, we have
\begin{equation}\label{MC}
\reg(I^r)\leq 2\reg(I)+(r-2)\omega(I),
\end{equation}
where $\omega(I)$ is the maximum among the degrees of
a minimal set of homogeneous generators of $I$.
In separate work by Nagel and Seceleanu still in preparation,
the minimal free resolutions of $I^r$ have been
determined for all $r\geq1$. The following resolution is the special case of this result
obtained using their argument with $r=1$.
By the Hilbert-Burch Theorem, the minimal free graded resolution of $I$ is
$$0\to R(-2n)\oplus R(-n-3) \to R(-n-1)^3 \to I \to 0.$$
Indeed, set $A=\left[\begin{matrix} xy & xz & yz \\ z^{n-1} & y^{n-1} & x^{n-1}\end{matrix}\right]^T$
and note that the ideal $I$ is generated by
the maximal minors of $A$. Furthermore, since $I$ is the defining ideal of
a reduced set of points in $\pr{2}(\field)$, we have $\dim(R/I)=\depth(R/I)=1$
and so the projective dimension has $\pd(R/I)=2$. Now the Hilbert-Burch theorem
guarantees that
$0\to R(-2n)\oplus R(-n-3) \stackrel{A}{\longrightarrow} R(-n-1)^3 \to R \to R/I \to 0$
is a minimal free resolution of $R/I$, which implies that the minimal resolution of $I$ fits the description above.

Thus $\reg(I) = 2n-1$ and $\omega(I)=n+1=\alpha(I)$, so the bound in (\ref{MC}) becomes
   $$\reg(I^r)\leq 4n-2+(r-2)(n+1)=r(n+1)+2(n-2).$$

   \textbf{Claim}:  $\ha{I}=n$. \\
   To see this, note that
   $I$ is contained in the complete intersection ideal
   $J=(y^n - z^n, z^n - x^n)$  of $n^2$ points.
   Thus
   $$\alpha(I^{(3m)}) \geq \alpha(J^{(3m)}) = \alpha(J^{3m}) = 3m\alpha(J) = 3mn.$$
   But $((x^n-y^n)(x^n-z^n)(y^n-z^n))^m$ is in $I^{(3m)}$ so $3mn \geq \alpha(I^{(3m)})$.
   Thus $\alpha(I^{(3m)}) = 3mn$, hence $\ha{I} = n$.

Now, $r\geq 4$ is equivalent to $3rn/2\geq (n+1)r + 2(n-2)$, so
for $m/r>3/2$ and $r\geq 4$ we obtain
  $$\alpha(I^{(m)})\geq m\ha{I} = mn > 3rn/2\geq (n+1)r + 2(n-2) \geq \reg(I^r)$$
and hence we have $I^r\subseteq I^{(m)}$ whenever $r\geq4$ and $m/r>3/2$.
If $r=2$ but $m/r> 3/2$, then $m\geq4$, hence in this case we have
$I^{(m)}\subseteq I^r$ by (\ref{ELSHH}).

We are left with the case of $r=3$ and so $m\geq5$;
if $I^{(5)}\subseteq I^3$ (and hence $I^{(m)}\subseteq I^{(5)}\subseteq I^3$),
then $I^{(m)}\subseteq I^r$ for all $m$ and $r$ with $m/r>3/2$,
hence $\rho(I)\leq3/2$ and so $\rho(I) = 3/2$.
Thus we now check that $I^3$ contains $I^{(5)}$.
We have
   $$\alpha(I^{(5)})\geq 5\ha{I} = 5n >5n-1 = 3(n+1) + 2(n-2) \geq \reg(I^3),$$
so $I^3$ indeed contains $I^{(5)}$.

   The asymptotic resurgence of $I$ is easily established taking into account
   that the upper bound
   \begin{equation}\label{eq:upper bound on hatrho}
      \hatrho(I)\leq\frac{\omega(I)}{\ha{I}}
   \end{equation}
   (which was established in \cite[Theorem 1.2]{refGHvT})
   agrees in our situation with the lower bound stated in \eqref{eq:lower bound on hatrho}.
\end{proof}

Next we consider the example constructed in \cite{RefC. et al}.
Figure \drawingOne\ shows the example. It consists of 12 lines with 19 triple points (and 9
double points, which we ignore). The configuration as considered in \cite{RefC. et al}
used a specific set of points defined over the reals, but in fact the points can be defined
over the rationals (or any field $\field$ large enough to accommodate the desired
combinatorial structure of the arrangement of lines).
This is because one has some freedom in choosing the points.
This is indicated in Figure \drawingOne\ by representing
the points $A$, $B$ and $C$ as open circles; these points
are free to be placed anywhere, as long as they are not collinear. The three points shown
as triangles ($D$, $E$ and $F$) are required to lie on the lines
through pairs of the points $A$, $B$ and $C$ but are otherwise (mostly) free.
The other points are determined in terms of these 6.
By fixing an appropriate choice of coordinates, we see there is in fact
a single degree of freedom, represented in our construction below
by the parameter $t$. The specific example considered in \cite{RefC. et al}
is the one for which all of the points are affine and the points
$E,F$ and $L$ in Figure \drawingOne\ form an equilateral triangle.
It corresponds (up to a choice of coordinates) to choosing
our parameter $t$ to be $t=-\frac{\sqrt{3}-1}{2}$
(as is easy to see by computing cross ratios for the points
$F$, $B$, $K$ and $C$). Note however, that
for some values of $t$, the configuration of points becomes degenerate
(for example, some of the points can coincide, as we will see below),
and so some values of $t$ are not allowed.

So here is the construction: take three general points
$A, B, C\in\pr2$, as shown in Figure \drawingOne. We may assume
that $A=[0:0:1]$, $B=[0:1:0]$ and $C=[1:0:0]$. We may also assume $\field[\pr2]=\field[x,y,z]$,
where $x=0$ is the line $AB$ through $A$ and $B$, $y=0$ is the line $AC$
through $A$ and $C$, and $z=0$
is the line $BC$ through $B$ and $C$. Now pick general points $D\in AB$, $E\in AC$
and $F\in BC$. By appropriate choice of coordinates, we may assume $D=[0:1:1]$ and
$E=[1:0:1]$, but this fixes the coordinate system on $\pr2$, so now $F$ must be written as
$F=[1:t:0]$, for some parameter $t$, which can either be in $\field$ or in some extension field of
$\field$. (However, not all values of $t$ are allowed. If $t=0$, then $F=C$, but as Figure
\drawingOne\ shows, $F$ and $C$ should be distinct. As we will see below, we also
need $t\neq -1,-2$: if $t=-1$, then $F=K$ and $DE=NO$, while if $t=-2$, then $S=D$.
Also, we must avoid $t^2+t+1=0$, since in that case $M=N=C$.)

With these choices, $BE$ is $x-z=0$, $AF$ is $tx-y=0$,
$DF$ is $tx-y+z$ and $DE$ is $x+y-z=0$. Then we obtain the following points, shown
in  Figure \drawingOne:
$G=[1:t:1]$ is the point $AF\cap BE$,
$H=[1:t+1:1]$ is the point $DF\cap BE$,
$I=[1:0:-t]$ is the point $DF\cap AC$,
$J=[1:t:t+1]$ is the point $AF\cap DE$, and
$K=[1:-1:0]$ is the point $BC\cap DE$.
Then $HJ$ is the line $(t^2+t+1)x-ty-z=0$ and
$L$ is the point $[0:1:-t]= HJ\cap AB$,
$M$ is the point $[1:0:t^2+t+1]= HJ\cap AC$, and
$N$ is the point $[t:t^2+t+1:0]=HJ\cap BC$.
Next, $IK$ is the line $tx+ty+z=0$,
$O$ is the point $[t:-(t+1):t]= IK\cap BE$ and
$P$ is the point $[1:t:-(t^2+t)]= IK\cap AF$.
(Note that $L$ has already been defined as the point $HJ\cap AB$,
but it is easy to check that $L$ is also on $IK$ and is thus
the point of intersection of all three lines, $HJ, AB, IK$, as shown in Figure \drawingOne.)
We now get the line $GM$: $(t^3+t^2+t)x-(t^2+t)y-tz=0$ and the points
$Q=[0:-t:t^2+t]=[0:-1:t+1]=GM\cap AB$ and
$R=[t^2+2t:t^3+2t^2+t:t]= [t+2:t^2+2t+1:1]= GM\cap DF$, followed by
the line $NO$: $(t^2+t+1)x-ty-(t^2+2t+2)z=0$ (note that $R$ is on $NO$,
hence $R$ is the the point of intersection of $GM, DF, NO$). The 19th and final point
is $S=[t^2+3t+2:-(t+1):t^2+2t+1] = [t+2 :-1: t+1] =  DE\cap NO$.
(Note that if $t=-1$, then $DE=NO$, so $S$ is not defined, and if $t=-2$, then
$S=[0:1:1]=D$.)
There is one last line, $CQ$: $(t+1)y+z=0$, and it is easy to check that
$P$ and $S$ are on $CQ$.

(As an aside we also mention that there are 10 conics through
sets of 6 points, as can be seen directly if one carries out the construction above
using, for example, the software Geogebra, available on-line for free
and which we used to create Figure \drawingOne. Each of the points $A,H,K,B,D,E,F,I,J,L$
is a triple point, but the union of the three lines through any one of these
10 points contains only 13 of the 19 points $A,\ldots,S$. The missing 6
lie on a conic, reducible for the points $A$, $H$ and $K$.)

Given any field $\mathbb F$, one can construct the ideal $I\subset \mathbb F(t)[x,y,z]$
of the points $A,\ldots,S$ using software such as
Singular \cite{Sing} (see the script provided in \cite{RefC. et al})
or Macaulay 2 \cite{refM2} (code is included as commented out text
in the \TeX\ source file for this article). When $\mathbb F={\bf Q}$ is the rationals,
so $\field=\mathbb F(t)$ for an indeterminate $t$, one finds that
$I$ is generated by 3 quintics
and has $\alpha(I)=\omega(I)=5$ and $\reg(I)=7$, and that $I^{(3)}\not\subseteq I^2$
(this failure of containment can be checked fairly efficiently by checking that
the product of the forms defining the 19 lines, which clearly is in $I^{(3)}$,
is not in $I^2$). In fact, the same results will hold for $\field={\bf Q}$
by taking $t$ to be any sufficiently general element of either ${\bf Q}$ or
of an extension field of $\bf Q$.
One can even take $\field$ to be a finite field.
For example, for $\field={\bf Z}/{31991\bf Z}$ and $t=5637$ (a specific but randomly
chosen value), Macaulay 2 shows that the points $A,\ldots,S$
are distinct and that $I$ again satisfies $\alpha(I)=\omega(I)=5$, $\reg(I)=7$ with $I^{(3)}\not\subseteq I^2$.

\begin{figure}[h]
\begin{tikzpicture}[line cap=round,line join=round,>=triangle 45,x=.75cm,y=.75cm]
\clip(-6.295541561786057,-2.7118849043725834) rectangle (7.507390761802504,8.288421566710701);
\draw [domain=-6.295541561786057:7.507390761802504] plot(\x,{(--4.308854742566771-3.7781170691944217*\x)/1.4573087803652975});
\draw [domain=-6.295541561786057:7.507390761802504] plot(\x,{(--3.6262606986009076--2.3868366676731565*\x)/2.587598630374626});
\draw [domain=-6.295541561786057:7.507390761802504] plot(\x,{(-21.1897240279081--1.3912804015212652*\x)/-4.044907410739923});
\draw [domain=-6.295541561786057:7.507390761802504] plot(\x,{(--9.052652261566836--1.4354160754323595*\x)/5.353689635521689});
\draw [domain=-6.295541561786057:7.507390761802504] plot(\x,{(-23.336433152348445-7.686967124283885*\x)/-2.7803230550587763});
\draw [domain=-6.295541561786057:7.507390761802504] plot(\x,{(-13.823121538270541-5.621052349141927*\x)/-3.5771950646237727});
\draw [domain=-6.295541561786057:7.507390761802504] plot(\x,{(-21.119980432508115--0.2767862186201042*\x)/-6.0141264063219895});
\draw [domain=-6.295541561786057:7.507390761802504] plot(\x,{(--3.82935328761021--2.3478598593786866*\x)/-0.06914043897792954});
\draw [domain=-6.295541561786057:7.507390761802504] plot(\x,{(-8.429716098031342--1.5803614713398195*\x)/-1.461534024303433});
\draw [domain=-6.295541561786057:7.507390761802504] plot(\x,{(--13.97744076695705--4.8878253451632405*\x)/0.8810581619647937});
\draw [domain=-6.295541561786057:7.507390761802504] plot(\x,{(--3.7685284024981756--2.601269392035054*\x)/4.649191503365426});
\draw [domain=-6.295541561786057:7.507390761802504] plot(\x,{(--22.578370086368736-2.3095271031601747*\x)/3.712783904352978});
\begin{scriptsize}
\draw [color=black] (-1.0148207931868989,5.587674083345914) circle (2.5pt);
\draw[color=black] (-0.7070659339631945,5.709150929642949) node {$A$};
\draw [color=black] (0.44248798717839866,1.8095570141514923) circle (2.5pt);
\draw[color=black] (0.10378746810102532,2.099289475761507) node {$B$};
\draw [color=black] (3.0300866175530246,4.196393681824649) circle (2.5pt);
\draw[color=black] (3.0231893977940376,4.71507360476645) node {$C$};
\draw [fill=black,shift={(5.796177622700087,3.2449730895838518)}] (0,0) ++(0 pt,3.75pt) -- ++(3.2475952641916446pt,-5.625pt)--++(-6.495190528383289pt,0 pt) -- ++(3.2475952641916446pt,5.625pt);
\draw[color=black] (5.802460034861785,3.6407336946394193) node {$E$};
\draw [fill=black,shift={(-0.21794878362190206,3.521759308203956)}] (0,0) ++(0 pt,3.75pt) -- ++(3.2475952641916446pt,-5.625pt)--++(-6.495190528383289pt,0 pt) -- ++(3.2475952641916446pt,5.625pt);
\draw[color=black] (0.10707703722593714,3.30047110938889) node {$D$};
\draw [fill=black,shift={(-3.795143848245675,-2.0992930409379706)}] (0,0) ++(0 pt,3.75pt) -- ++(3.2475952641916446pt,-5.625pt)--++(-6.495190528383289pt,0 pt) -- ++(3.2475952641916446pt,5.625pt);
\draw[color=black] (-3.9133095871565593,-1.7309658559957302) node {$F$};
\draw [fill=black] (-2.684591809853696,0.9711332512128636) circle (1.5pt);
\draw[color=black] (-2.909035323342163,1.2818569354474623) node {$G$};
\draw [fill=black] (-1.6676251409406595,1.2437994691144594) circle (1.5pt);
\draw[color=black] (-1.4010119662144292,1.1050808582597416) node {$H$};
\draw [fill=black] (2.179109096757502,3.4114402790218783) circle (1.5pt);
\draw[color=black] (2.229112072917538,3.780864987264684) node {$K$};
\draw [fill=black] (-1.736765579918589,3.591659328493146) circle (1.5pt);
\draw[color=black] (-1.4843671816519732,3.3572471865766104) node {$J$};
\draw [fill=black] (0.7175750724540692,4.991801750361698) circle (1.5pt);
\draw[color=black] (0.7110230694771718,5.369019637017685) node {$I$};
\draw [fill=black] (-1.8600762902341084,7.779024512473213) circle (1.5pt);
\draw[color=black] (-2.114957998465664,7.714738119709995) node {$L$};
\draw [fill=black] (3.0211486423482095,2.5009401927647215) circle (1.5pt);
\draw[color=black] (3.0698998286691257,2.870011585200463) node {$O$};
\draw [fill=black] (-1.8035336478889021,5.858958596376104) circle (1.5pt);
\draw[color=black] (-2.068247567590576,6.256517823644362) node {$M$};
\draw [fill=black] (-1.628042861017216,-0.10032919927033278) circle (1.5pt);
\draw[color=black] (-1.357656750776885,-0.22952163711781775) node {$N$};
\draw [fill=black] (-0.6826972867999531,6.5059207849848235) circle (1.5pt);
\draw[color=black] (-0.42009291783757707,6.7002669169577) node {$P$};
\draw [fill=black] (-3.017903127344981,-0.8779699928119649) circle (1.5pt);
\draw[color=black] (-2.819035323342163,-0.9235333156816858) node {$R$};
\draw [fill=black] (4.460785376258695,3.306431420402868) circle (1.5pt);
\draw[color=black] (4.424502324046683,3.6173784792018755) node {$S$};
\draw [fill=black] (-1.5856672542009285,7.067610790860788) circle (1.5pt);
\draw[color=black] (-1.2075254581516205,6.650332563270332) node {$Q$};
\end{scriptsize}
\end{tikzpicture}
\caption{A configuration of $12$ lines with $19$ triple points.}
\end{figure}
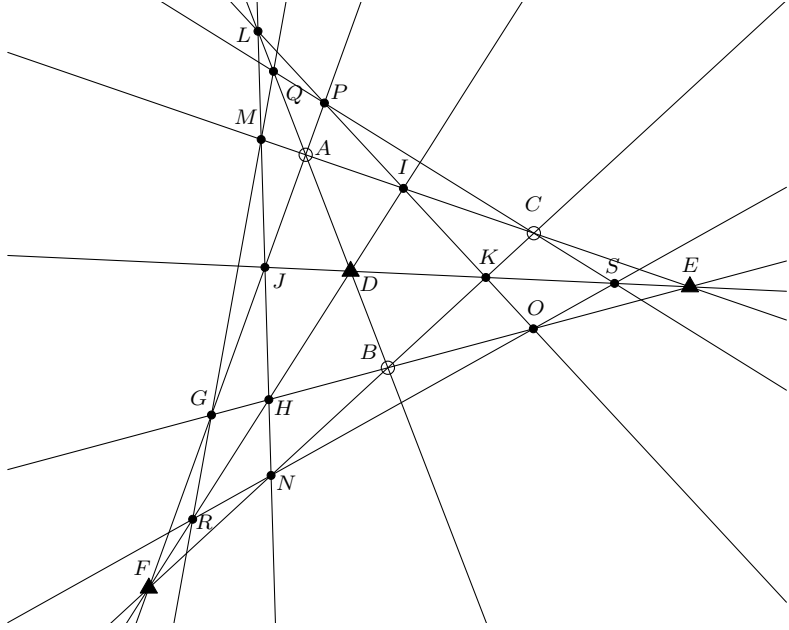


\begin{thm}\label{realExample}
Let $\field$ be a field such that the points $A,\ldots,S\in \field[\pr2]$ specified above are distinct
and the ideal $I$ of the set $Z$ of these 19 points satisfies
$\alpha(I)=\omega(I)=5$ and $\reg(I)=7$ with $I^{(3)}\not\subseteq I^2$. Then
   $$\rho(I)=\frac{3}{2}\; \mbox{ and }\; \hatrho(I)=\frac{5}{4}.$$
\end{thm}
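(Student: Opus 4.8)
The plan is to follow the pattern of the proof of Theorem~\ref{thm:fermat configurations}. The lower bound $\rho(I)\ge 3/2$ is immediate from the hypothesis $I^{(3)}\not\subseteq I^2$, and \eqref{eq:lower bound on hatrho} gives $\hatrho(I)\ge \alpha(I)/\ha I$; so everything reduces to computing the Waldschmidt constant, and the claim is $\ha I=4$.

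For $\ha I\le 4$: write $L_1,\dots,L_{12}$ for the twelve linear forms cutting out the lines of the configuration. Each of the nineteen points of $Z$ lies on exactly three of these lines, so $F:=L_1\cdots L_{12}$ vanishes to order exactly $3$ at every point of $Z$, i.e.\ $F\in I^{(3)}$; hence $F^m\in I^{(3m)}$, so $\alpha(I^{(3m)})\le 12m$ and $\ha I=\inf_k\alpha(I^{(k)})/k\le 4$. For the reverse inequality I would show $\alpha(I^{(m)})\ge 4m$ for all $m\ge 1$ by induction on $m$, exploiting the combinatorial fact (checked from the explicit coordinates of $A,\dots,S$ for the admissible values of $t$) that each of the twelve lines contains at least four points of $Z$. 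Suppose $F\in I^{(m)}$ with $d:=\deg F<4m$. Restricting $F$ to a line $L_i$, which carries at least four points at which $F$ vanishes to order $\ge m$, Bézout on that $\P^1$ forces $F|_{L_i}\equiv 0$ (else $d\ge 4m$); so $L_i\mid F$ for every $i$, whence, the $L_i$ being pairwise non-proportional, $L_1\cdots L_{12}\mid F$ and in particular $d\ge 12$. Writing $F=(L_1\cdots L_{12})G$ and using that $L_1\cdots L_{12}$ vanishes to order exactly $3$ at each point of $Z$, we get $G\in I^{(m-3)}$, so $\alpha(I^{(m-3)})\le d-12<4(m-3)$. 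For $m\le 3$ this is absurd, since a nonzero form of degree $d<4m\le 12$ cannot be divisible by all twelve $L_i$ (and $\alpha(I^{(1)})=\alpha(I)=5$ anyway); for $m\ge 4$ it contradicts the inductive hypothesis. Hence $\alpha(I^{(m)})\ge 4m$ for all $m$, so $\ha I=4$.

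Granting $\ha I=4$: since $\alpha(I)=\omega(I)=5$, the sandwich $\alpha(I)/\ha I\le\hatrho(I)\le\omega(I)/\ha I$ from \eqref{eq:lower bound on hatrho}--\eqref{eq:upper bound on hatrho} collapses to $\hatrho(I)=5/4$. For the resurgence it remains to prove $\rho(I)\le 3/2$. By \cite[Lemma 2.3.4]{refBH}, $\alpha(I^{(m)})>\reg(I^r)$ implies $I^{(m)}\subseteq I^r$, and \eqref{MC} with $\reg(I)=7$, $\omega(I)=5$ gives $\reg(I^r)\le 5r+4$. If $r\ge 4$ and $m/r>3/2$, then $\alpha(I^{(m)})\ge 4m>6r\ge 5r+4\ge\reg(I^r)$, so $I^{(m)}\subseteq I^r$. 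If $r=1$, then $I^{(m)}\subseteq I$ trivially; if $r=2$ and $m/r>3/2$, then $m\ge 4$ and \eqref{ELSHH} with $N=2$ gives $I^{(m)}\subseteq I^2$; and if $r=3$ and $m/r>3/2$, then $m\ge 5$, and $\alpha(I^{(5)})\ge 20>19\ge\reg(I^3)$ gives $I^{(5)}\subseteq I^3$, hence $I^{(m)}\subseteq I^{(5)}\subseteq I^3$. So no ratio $m/r>3/2$ witnesses a failure of containment, i.e.\ $\rho(I)\le 3/2$, and therefore $\rho(I)=3/2$.

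The step I expect to be the real obstacle is the lower bound $\alpha(I^{(m)})\ge 4m$: the peeling induction closes up only because \emph{every} line of the configuration meets $Z$ in at least four points while \emph{every} point of $Z$ lies on exactly three lines, so the work is in confirming — from the explicit equations of the twelve lines and for the admissible values of the parameter $t$ — that the incidence structure really is that of $12$ lines carrying $19$ triple points with each line through $4$ or $5$ of them.
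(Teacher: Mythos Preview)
Your proposal is correct and follows essentially the same route as the paper's proof: both compute $\ha{I}=4$ via the Bezout peeling argument (you phrase it as induction on $m$, the paper as a finite descent, but the content is identical), then read off $\hatrho(I)=5/4$ from the sandwich \eqref{eq:lower bound on hatrho}--\eqref{eq:upper bound on hatrho}, and bound $\rho(I)$ using \cite[Lemma 2.3.4]{refBH} together with the regularity estimate \eqref{MC}. The only cosmetic difference is that for $\rho$ you argue directly that $m/r>3/2$ forces containment (handling $r=3$ via $\alpha(I^{(5)})\ge 20>19\ge\reg(I^3)$, exactly as in Theorem~\ref{thm:fermat configurations}), whereas the paper argues the contrapositive, showing that $I^{(m)}\not\subseteq I^r$ with $r\ge 3$ implies $4m<5r+4$; these are the same computation read in opposite directions. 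Your remark that the combinatorial input ``each line meets $Z$ in at least four points'' is the real point of contact with the explicit construction is apt; the paper likewise takes this as established by the preceding description of the configuration.
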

\begin{proof}
   We begin by computing the Waldschmidt constant $\widehat{\alpha}(I)$ of $I$
   (we will show that $\widehat{\alpha}(I)=4$).
   By way of contradiction, assume that there exists $m\geq 1$ such that
   \begin{equation}\label{eq:small alpha}
      \alpha(I^{(m)})\leq 4m-1.
   \end{equation}
   Let $D$ be a divisor of degree $d\leq 4m-1$ vanishing on $Z$ to order at least $m$.

   Since every line in the configuration contains at least $4$ configuration points,
   Bezout's Theorem implies that each configuration line is a component of $D$.
   Subtracting these $12$ lines from $D$ we obtain a divisor $D'$ of degree
   $d'=d-12$ vanishing at each point of $Z$ to order at least $m-3$. In other
   words, we are in the situation of \eqref{eq:small alpha} with $m$ replaced
   by $m'=m-3$. Indeed
   $$\alpha(I^{(m')})\leq d'=d-12\leq 4(m-3)-1=4m'-1.$$
   Continuing by a finite descent, we will be reduced to a situation
   in which $m'$ is either $1, 2$ or $3$ and the degree $d'$ is at most either
   $3, 7$ or $11$ respectively. Each of these possibilities is eliminated by one more application of
   Bezout's Theorem. Hence our assumption in \eqref{eq:small alpha}
   was false and it must be that
   $$\alpha(I^{(m)})\geq 4m$$
   for all $m\geq 1$ and hence $\widehat{\alpha}(I)\geq4$. Since the 12 lines give a form in $I^{(3)}$,
   we have $\alpha(I^{(3)})\leq12$ (and hence $\alpha(I^{(3m)})\leq m\alpha(I^{(3)})\leq12m$, so
   $\widehat{\alpha}(I)\leq4$), hence $\widehat{\alpha}(I)=4$.

   Now applying \eqref{eq:lower bound on hatrho} and \eqref{eq:upper bound on hatrho},
   we obtain
   $$\hatrho(I)=\frac54.$$

   Finally we turn to $\rho(I)$. The proof follows the same
   lines as that of Theorem \ref{thm:fermat configurations}.
   Suppose $I^{(m)}\not\subseteq I^r$. This never happens for $r=1$,
   so consider $r=2$. Since $I^{(m)}\subseteq I^2$ for $m\geq 2r$ and since we know
   $I^{(3)}\not\subseteq I^2$, we have
   $I^{(m)}\not\subseteq I^2$ if and only if $m\leq 3$ and hence $\frac{m}{r}\leq \frac32$.
   Now assume that $r>2$.
Then $\alpha(I^{(m)})<\reg(I^r)$, but we saw above that $4m\leq \alpha(I^{(m)})$
   and $\reg(I^r)\leq 2\reg(I)+(r-2)\omega(I)=5r+4$, hence
$4m< 5r+4$, or $\frac{m}{r}<\frac54+\frac1r$. If $r\geq 4$, then
$\frac{m}{r}<\frac54+\frac14=\frac32$. If $r=3$, then
$4m< 5r+4=19$, so $m\leq 4$, hence $\frac{m}{r}\leq\frac43<\frac32$.
Thus $\frac{m}{r}\leq \frac32$ in all cases, with equality in one case (namely, $m=3$, $r=2$)
so $\rho(I)=\frac32$.
\end{proof}

\section{Results in dimension $N\geq 2$ over finite fields}

Here we compute the resurgences for some ideals including a range of ideals giving
exclusively positive characteristic counterexamples
to Conjecture \ref{Bconj}.

So in this section we let $\field={\mathbb F}_s$ be a field of characteristic $p>0$ of $s$ elements 
and let $\field'={\mathbb F}_p$ be the subfield of order $p$.
Let $I\subseteq \field[\pr{N}]=\field[x_0,\ldots, x_N]$ be the ideal of
all of the $\field$-points of $\pr{N}=\pr{N}(\field)$ but one.
We recall that $I^{(Nr-(N-1))} \not\subseteq I^r$ holds for the following cases
(see \cite[Proposition 2.2 and Section 3]{refHS}):
\begin{enumerate}
\item[(i)]
$p>2$, $N=2$ and $r=(s+1)/2$;
\item[(ii)]
$s=p>2$, $r=2$ and $N=(p+1)/2$ (in which case $Nr-(N-1)=(p+3)/2$) and
\item[(iii)]
$r=(p+N-1)/N$ (in which case $Nr-(N-1)=p$), $s=p>(N-1)^2$ and $p\equiv 1 (\!\!\!\mod N)$.
\end{enumerate}

\begin{lem}\label{reglemma}
Let $I$ be the ideal of all but one of the $\field$-points of ${\bf P}^N(\field)$;
let $q$ be the excluded point. Then  $\reg(I) = N(s-1)+1$.
\end{lem}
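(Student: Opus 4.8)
The plan is to pin down $\reg(I)$ by computing the Hilbert function of $R/I$. Since $I$ is the ideal of a reduced finite set $Z$ of $d_Z:=|\pr N(\field)|-1=s+s^2+\dots+s^N$ points, $R/I$ is Cohen--Macaulay of dimension one, so $\reg(I)=\reg(R/I)+1$ and $\reg(R/I)$ is the least degree $d$ with $h_{R/I}(d)=d_Z$; moreover $h_{R/I}$ is strictly increasing until it attains the value $d_Z$ and constant afterwards. Hence the lemma amounts to the two assertions
\[
\text{(a)}\ h_{R/I}\big(N(s-1)\big)=d_Z,\qquad\text{(b)}\ h_{R/I}\big(N(s-1)-1\big)<d_Z .
\]

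To get (b) I would bring in the full point set $W=\pr N(\field)$, whose ideal $I_W\subseteq I$ is the ideal of maximal minors of the $2\times(N+1)$ matrix $\bigl(\begin{smallmatrix}x_0&\cdots&x_N\\ x_0^s&\cdots&x_N^s\end{smallmatrix}\bigr)$. This ideal has the generic codimension $N$ and its entries lie in $\m$, so the Eagon--Northcott complex is a minimal graded free resolution of $R/I_W$; its $i$-th free module has generators in degrees $(s{+}1)+a+bs$ with $a+b=i-1$, $a,b\ge 0$, which contributes $i(s-1)+1$ to the regularity, a quantity maximised uniquely (and there by a single generator, i.e. Betti number $1$) at $i=N$. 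Thus $\reg(R/I_W)=N(s-1)+1$ and the $h$-polynomial of $R/I_W$ is monic of that degree, so $h_{R/I_W}(d)=\deg W$ for $d\ge N(s-1)+1$ while $h_{R/I_W}(N(s-1))=\deg W-1=d_Z$. Then $h_{R/I}(N(s-1)-1)\le h_{R/I_W}(N(s-1)-1)<h_{R/I_W}(N(s-1))=d_Z$, the strict inequality because $h_{R/I_W}$ is still strictly increasing at degree $N(s-1)<\reg(R/I_W)$; this is (b).

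For (a) I would reduce to the claim $(\ast)$ that every form of degree $N(s-1)$ vanishing on $Z$ already vanishes at $q$, i.e. $I_{N(s-1)}=(I_W)_{N(s-1)}$; granting this, $h_{R/I}(N(s-1))=h_{R/I_W}(N(s-1))=d_Z$ by the previous paragraph. To prove $(\ast)$, choose coordinates with $q=[1:0:\dots:0]$ and suppose $F\in R_{N(s-1)}$ vanishes on $Z$ but not at $q$. Dehomogenising at $x_0$ gives $f(y_1,\dots,y_N)=F(1,y_1,\dots,y_N)$ of degree $\le N(s-1)$ whose top-degree homogeneous part $f_{\mathrm{top}}$ equals $F(0,y_1,\dots,y_N)$; then $f$ vanishes on $\field^N\setminus\{0\}$ with $f(0)\ne 0$, and $f_{\mathrm{top}}$ vanishes on $\pr{N-1}(\field)$. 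Reducing $f-f(0)\prod_{i=1}^N(1-y_i^{s-1})$ modulo the Gr\"obner basis $\{y_i^s-y_i\}$ (which never raises degrees) shows $f=f(0)\prod_i(1-y_i^{s-1})+\sum_i(y_i^s-y_i)g_i$ with $\deg g_i\le N(s-1)-s$; taking degree-$N(s-1)$ parts yields $f_{\mathrm{top}}=(-1)^Nf(0)\,y_1^{s-1}\cdots y_N^{s-1}+G$ with $G\in(y_1^s,\dots,y_N^s)$. But $f_{\mathrm{top}}$ lies in the ideal of $\pr{N-1}(\field)$, which is contained in $(y_1^s,\dots,y_N^s)$ since each of its generators $y_iy_j^s-y_i^sy_j$ lies in $(y_i^s,y_j^s)$. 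Hence $y_1^{s-1}\cdots y_N^{s-1}\in(y_1^s,\dots,y_N^s)$, which is false as this monomial has all exponents $<s$. This contradiction gives $(\ast)$, hence (a), and with (b) it yields $\reg(R/I)=N(s-1)$, i.e. $\reg(I)=N(s-1)+1$.

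The delicate point is the Eagon--Northcott bookkeeping in the second paragraph: confirming the expected grade and minimality, pinning down the extremal twist $Ns+1$, and checking that its Betti number equals $1$ (equivalently, that the $h$-vector of $\pr N(\field)$ ends in a single $1$); alternatively the two values $h_{R/I_W}(N(s-1))=\deg W-1$ and $h_{R/I_W}(N(s-1)+1)=\deg W$ can be extracted from the known dimension formula for projective Reed--Muller codes. The remaining ingredients — the dehomogenisation accounting, the containment $I_{\pr{N-1}(\field)}\subseteq(y_1^s,\dots,y_N^s)$, and the trivial case $N=1$ (where $I$ is principal of degree $s$) — are routine.
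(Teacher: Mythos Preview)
Your argument is correct, and it follows a genuinely different route from the paper's. The paper proves the lemma by liaison: it realises the affine part $B$ of $Z$ as the link $C:I_q$ of a single point inside the complete intersection $C=(x_1(x_1^{s-1}-x_0^{s-1}),\ldots,x_N(x_N^{s-1}-x_0^{s-1}))$, reads off $\reg(B)$ from the Davis--Geramita--Orecchia formula, and then uses a basic double link $I=x_0B+J$ (with $J$ the cone over $\pr{N-1}(\field)$) together with induction on $N$ to obtain $\reg(I)$. Your approach instead pins down the Hilbert function directly: the Eagon--Northcott resolution of $I_W$, $W=\pr N(\field)$, gives $\reg(R/I_W)=N(s-1)+1$ with extremal Betti number $1$, hence $h_{R/I_W}(N(s-1))=|W|-1=d_Z$, and then your explicit Cayley--Bacharach calculation (the Gr\"obner-basis manipulation over $\field[y_1,\dots,y_N]$) shows that no form of degree $N(s-1)$ separates $q$ from $Z$, forcing $h_{R/I}(N(s-1))=h_{R/I_W}(N(s-1))=d_Z$. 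The liaison proof is more structural and fits the lemma into a standard toolkit for points in $\pr N$; your proof is more self-contained (it avoids linkage theory entirely) and yields as a byproduct the Cayley--Bacharach property of $\pr N(\field)$ in degree $N(s-1)$, together with the finer statement that the $h$-vector of $\pr N(\field)$ ends in a single $1$. One small cosmetic point: your ``$f_{\mathrm{top}}$'' is really the degree-$N(s-1)$ homogeneous component of $f$, which may be zero when $F(0,y)=0$; the argument is unaffected, but it would be cleaner to name it accordingly.
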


\begin{proof}
Let $J$ be the defining ideal of the set of $\field$-lines through $q$,  and let $H$ be a hyperplane not passing through $q$. Without loss of generality we may assume that $q=[1:0:\ldots:0]$ and that $H$ is defined by $x_0$. 
The defining ideal of the set of points of ${\bf P}^N(\field)$ off $H$ and excluding
$q$ is given by $B=C:I_q$, where $I_q$ is the defining ideal of the point $q$ and
$C$ is the ideal defining the set of points ${\bf P}^N(\field)\setminus \{H\}$,
namely the complete intersection
$C=(x_1(x_1^{s-1}-x_0^{s-1}),\ldots,x_N(x_N^{s-1}-x_0^{s-1}))$. To see that $C$ is
precisely the ideal indicated before, note that both are unmixed ideals of the same
degree which satisfy an obvious containment. The relation $B=C:I_q$ yields that
$B$ is linked to $I_q$ via the complete intersection $C$. As a consequence of a
well-known formula for the behavior of Hilbert functions under linkage, we have,
as in \cite[Theorem 3]{refDGO}, that $\alpha(I_q/C) + \reg (R/B) =\reg (R/C)$.  The Koszul resolution shows that $\reg (R/C) = N (s-1)$. Since $\alpha(I_q/C)=1$, we conclude  $\reg (B) = 1 + \reg (R/B) =  N (s-1)$.

By \cite[Lemma 4.7]{refHS}, $I$ is a basic double link of $J$, i.e., $I=x_0 B+J$. It follows that 
there is a short exact sequence 
\[
0 \to (R/B) (-1) \to R/I \to R/(x_0, J) \to 0, 
\]
where the embedding is induced by multiplication by $x_0$. Taking cohomology we get 
\begin{equation}
\label{eq:reg all but one}
  \reg (I) = \max\{1 + \reg (B), \reg (x_0, J)\} = \max\{1 + \reg (B), \reg (J)\}.
\end{equation}

In order to compute the regularity of $J$ we use induction on $N$. Let $D$ be the ideal of  all $\field$-points of ${\bf P}^N(\field)$. We claim $\reg (D) = N (s-1) + 2$. 

Indeed, the ideal $x_0 C + J$ is a basic double link of $C$. Thus, it is saturated of degree 
\[
\deg C + \deg J = \deg D. 
\]
Since $x_0 C + J \subset D$ and both saturated ideals have the same degree, we get $x_0 C + J = D$. As above, this gives 
\begin{equation}
\label{eq:reg all points}
   \reg (D) = \max \{ 1 + \reg (C), \reg (J)\}.
\end{equation}

Now observe that $J$ is the defining ideal of the cone in ${\bf P}^N(\field)$ over the $\field$-points in the hyperplane $H$. Hence, the induction hypothesis yields $
\reg (J) = (N-1)(s-1) +2$. Using, $\reg (C) = N (s-1) + 1$, Equation \eqref{eq:reg all points} provides $\reg (D) = N (s-1) + 2$, as claimed.

Finally, applying Equation \eqref{eq:reg all but one}, we obtain 
\[
\reg (I) = \max\{N (s-1) +1, (N-1)(s-1) +2\} = N (s-1) +1,
\]
as desired.
 \end{proof}

\begin{thm}\label{mainThmSec3}
Let $I$ be the ideal of all but one of the $\field$-points of ${\bf P}^N(\field)$.
Then   $\rho(I) = \widehat{\rho}(I)= \frac{N(s-1)+1}{s}$ and $\ha{I}=s$.
\end{thm}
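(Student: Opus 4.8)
The plan is to pin the three quantities down by first computing the Waldschmidt constant $\ha{I}$ (the paper already records $\reg(I)=N(s-1)+1$ in Lemma \ref{reglemma}), then bounding $\rho(I)$ from above and $\hatrho(I)$ from below by the common value $\tfrac{N(s-1)+1}{s}$; since $\hatrho(I)\le\rho(I)$ always, this collapses to the asserted equalities.

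\textbf{Step 1: $\ha{I}=s$.} For the upper bound I would use the arrangement directly: let $F$ be the product of the linear forms cutting out the $s^N$ $\field$-hyperplanes of $\pr N$ not passing through the excluded point $q$. A point $p$ of $Z$ lies on exactly $s^{N-1}$ of these hyperplanes (the number of hyperplanes through $p$ but not through the line $\overline{pq}$), so $F\in I^{(s^{N-1})}$ while $\deg F=s^N$, giving $\ha{I}\le s$. For $\ha{I}\ge s$, equivalently $\alpha(I^{(m)})\ge sm$ for all $m$, I would run a B\'ezout-type descent on $\field$-hyperplane sections, by induction on $N$ (the case $N=1$ being immediate). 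Given a nonzero $D\in I^{(m)}$ of degree $d$: either $D$ restricts nontrivially to some $\field$-hyperplane $H$, in which case $D|_H$ lies in the $m$-th symbolic power of the ideal of (all of, or all but one of) the $\field$-points of $H\cong\pr{N-1}$ and $d\ge sm$ by induction; or every $\field$-linear form divides $D$, in which case one factors out the product $\Psi$ of all $\tfrac{s^{N+1}-1}{s-1}$ of them, which vanishes to order $\tfrac{s^N-1}{s-1}$ at every $\field$-point, and either the remaining symbolic order $m-\tfrac{s^N-1}{s-1}$ is still positive (descend on $m$) or it is $\le 0$, in which case already $d\ge\deg\Psi=\tfrac{s^{N+1}-1}{s-1}>s\cdot\tfrac{s^N-1}{s-1}\ge sm$. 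This establishes $\ha{I}=s$.

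\textbf{Step 2: $\rho(I)\le\tfrac{N(s-1)+1}{s}$.} This mirrors the proof of Theorem \ref{thm:fermat configurations}. By \cite[Lemma 2.3.4]{refBH}, $\alpha(I^{(m)})>\reg(I^r)$ implies $I^{(m)}\subseteq I^r$. Using $\alpha(I^{(m)})\ge m\ha{I}=ms$, the bound $\reg(I^r)\le 2\reg(I)+(r-2)\omega(I)$ of \eqref{MC}, the fact that minimal generators occur in degrees at most the regularity so $\omega(I)\le\reg(I)=N(s-1)+1$, and Lemma \ref{reglemma}, one gets $\reg(I^r)\le r(N(s-1)+1)$ for $r\ge 2$; hence if $m/r>\tfrac{N(s-1)+1}{s}$ then $\alpha(I^{(m)})\ge ms>r(N(s-1)+1)\ge\reg(I^r)$ and $I^{(m)}\subseteq I^r$. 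The case $r=1$ is trivial, so $\rho(I)\le\tfrac{N(s-1)+1}{s}$.

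\textbf{Step 3 (the crux): $\hatrho(I)\ge\tfrac{N(s-1)+1}{s}$.} This is the step I expect to be the main obstacle. It cannot be read off from $\tfrac{\alpha(I)}{\ha{I}}\le\hatrho(I)$, since in general $\alpha(I)<N(s-1)+1$ — for instance with $\field=\mathbb F_3$, $N=2$ one has $\alpha(I)=4$ but $N(s-1)+1=5$ — and it is not delivered outright by the non-containments (i)--(iii), whose exponent ratios are strictly smaller than $\tfrac{N(s-1)+1}{s}$. Instead I would construct, for every $t\ge 1$, an explicit form $F_t$ lying in $I^{((N(s-1)+1)t)}$ but not in $I^{st}$: the natural candidate is a product of powers of the linear forms of the configuration hyperplanes — combining the form $F$ of Step 1 with the product of the hyperplanes through $q$, with exponents chosen so the vanishing orders at the points of $Z$ add up to exactly $(N(s-1)+1)t$ — and one then has to verify $F_t\notin I^{st}$. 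Because the relevant initial degrees are too large for membership to be obstructed on degree grounds, this verification is precisely the analogue in arbitrary $N$ and $s$ of the delicate computations underlying the counterexamples of \cite{refHS} and \cite{refDST}, and it is where the real work lies. Granting it, $\hatrho(I)\ge\tfrac{N(s-1)+1}{s}$; together with Step 2 and $\hatrho(I)\le\rho(I)$ this forces $\rho(I)=\hatrho(I)=\tfrac{N(s-1)+1}{s}$, completing the proof.
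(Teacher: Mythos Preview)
Your Steps 1 and 2 are essentially fine, though both take detours compared to the paper. For the lower bound $\alpha(I^{(m)})\ge ms$ in Step 1, the paper simply observes that $I$ is contained in the complete intersection $C=(x_1(x_1^{s-1}-x_0^{s-1}),\ldots,x_N(x_N^{s-1}-x_0^{s-1}))$ defining the $s^N$ affine $\field$-points other than $q$; then $I^{(m)}\subseteq C^{(m)}=C^m$ gives $\alpha(I^{(m)})\ge m\alpha(C)=ms$ immediately, with no induction or B\'ezout descent needed. For Step 2 the paper quotes the bound $\reg(I^r)\le r\,\reg(I)$ directly from \cite[Proposition 1.7.1]{refC1} (valid since $\dim R/I\le 1$), rather than going through \eqref{MC} and $\omega(I)\le\reg(I)$.

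The genuine gap is Step 3. You correctly flag it as the crux, but you do not carry it out, and the plan you sketch is not the one that works. You propose combining $F$ with the product of hyperplanes through $q$ and then appealing to ``delicate computations'' to obstruct membership in $I^{st}$. The paper's argument is short and rests on exactly the opposite feature, namely that $F$ does \emph{not} vanish at $q$. By the argument of \cite[Proposition 3.8]{refHS}, every element of $I$ of degree less than $N(s-1)+1$ also vanishes at $q$; consequently every element of $I^r$ of degree less than $r(N(s-1)+1)$ vanishes at $q$ (in any monomial $f_1\cdots f_r$ contributing to such an element, some factor has degree $<N(s-1)+1$). Since $F^{(N(s-1)+1)t}$ has degree $(N(s-1)+1)ts^N<(s^Nt+1)(N(s-1)+1)$ and does not vanish at $q$, it cannot lie in $I^{s^Nt+1}$, while it visibly lies in $I^{((N(s-1)+1)ts^{N-1})}$. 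Thus $I^{((N(s-1)+1)s^{N-1}t)}\not\subseteq I^{s^Nt+1}$ for every $t$; setting $t=uv$ and using $I^{(s^Nv+1)u}\subseteq I^{s^Nuv+1}$ yields $I^{((N(s-1)+1)s^{N-1}v\cdot u)}\not\subseteq I^{(s^Nv+1)u}$ for all $u$, whence $\hatrho(I)\ge\frac{(N(s-1)+1)s^{N-1}v}{s^Nv+1}\to\frac{N(s-1)+1}{s}$. Note that multiplying in hyperplanes through $q$, as you suggest, would destroy precisely the non-vanishing at $q$ that drives this obstruction; and the paper's non-containments sit at ratios strictly below $\tfrac{N(s-1)+1}{s}$ and approach it only in the limit, whereas your target $I^{((N(s-1)+1)t)}\not\subseteq I^{st}$ sits exactly on the borderline ratio and you give no evidence it actually holds.
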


\begin{proof}
Let  $q$ be the excluded point and let $F$ be the product of all hyperplanes
defined over $\field$ but not vanishing at $q$, so $\deg(F) = s^N$. Since $F$
vanishes with multiplicity $s^{N-1}$ at each non-$q$ point, we have $F^{(N(s-1)+1)t}\in I^{((N(s-1)+1)ts^{N-1})}$.
By the argument of \cite[Proposition 3.8]{refHS} (which assumes $s=p$ but works also for $s>p$),
$I$ vanishes at all $\field$-points in degrees less
than $N(s-1)+1$, hence $I^{s^Nt+1}$ vanishes at $q$ in degrees less than
$(s^Nt+1)(N(s-1)+1)$. Since $\deg(F^{(N(s-1)+1)t}) = (N(s-1)+1)ts^N < (s^Nt+1)(N(s-1)+1)$,
we obtain $I^{((N(s-1)+1)s^{N-1}t)}\not\subseteq I^{s^Nt+1}$, thus
$\frac{(N(s-1)+1)s^{N-1}t}{s^Nt+1} \leq \rho(I)$ for all $t$, hence, after passing
to the limit as $t\to\infty$, we obtain $\frac{N(s-1)+1}{s}  \leq \rho(I)$.


To show that  $ \rho(I)\leq N - \frac{N-1}{s}$, it suffices to prove that $I^{(m)}\subseteq I^r$,
whenever $\frac{m}{r} > \frac{N(s-1)+1}{s}$, that is whenever $ms>r(N(s-1)+1)$.
Recall that by Lemma \ref{reglemma} we have $\reg(I)=N(s-1)+1$ and, as a consequence of
work of \cite{refCh,refGGP} improved upon in \cite[Proposition 1.7.1]{refC1}, it follows
that $\reg(I^r)\leq r \reg(I)=r(N(s-1)+1)$ for any positive integer $r$. (Note that the
preceding inequality is guaranteed to hold only for homogeneous ideals $I$ with $\dim(R/I)\leq 1$,
a hypothesis which is satisfied by our ideal.) Without loss of generality we may assume that
$q=[1:0:\ldots:0]$. Next, note that the ideal $I$ is contained in the complete intersection
$C=(x_1(x_1^{s-1}-x_0^{s-1}),\ldots,x_N(x_N^{s-1}-x_0^{s-1}))$ defining the $s^N$
points of ${\bf P}^N(\field)$ that are not situated on $H=V(x_0)$ and are distinct from
$q$. Thus $I^{(m)}\subseteq C^{(m)}=C^m$ and so $\alpha(I^{(m)})\geq \alpha(C^m)=m\alpha(C)=ms$.
Combining the three inequalities gives
$\alpha(I^{(m)})\geq ms >r(N(s-1)+1) \geq \reg(I^r)$. By \cite[Lemma 2.3.4]{refBH},
$\alpha(I^{(m)})>  \reg(I^r)$ implies $I^{(m)}\subseteq I^r$ as desired.

Now we show that $\widehat{\rho}(I)=\rho(I)=N - \frac{N-1}{s}$.
We know that $\widehat{\rho}(I)\leq \rho(I)=N - \frac{N-1}{s}$. It remains to see that the opposite inequality holds.
Recall from the first paragraph of this proof  that
$I^{((N(s-1)+1)s^{N-1}t)}\not\subseteq I^{s^Nt+1}$ for all $t>0$.
Now  for $u,v>0$, letting  $t=uv$, we deduce that $I^{((N(s-1)+1)s^{N-1}uv)}\not\subseteq I^{s^Nuv+1}$.
As a consequence,  $I^{((N(s-1)+1)s^{N-1}uv)}\not\subseteq I^{s^Nuv+u}=I^{(s^Nv+1)u}$,
because $I^{(s^Nv+1)u}\subseteq  I^{s^Nuv+1}$. Thus we have
$\frac{(N(s-1)+1)s^{N-1}v}{(s^Nv+1)}\leq \widehat{\rho}(I)$
for all $v>0$ and hence $\frac{N(s-1)+1}{s}=\lim_{v\to\infty}\frac{(N(s-1)+1)s^{N-1}v}{s^Nv+1}\leq\widehat{\rho}(I)$.

To finish, note by the argument in the first paragraph above that
$F^t\in I^{(ts^{N-1})}$, hence $\frac{\alpha(I^{(ts^{N-1})})}{t}\leq \frac{\deg(F^t)}{t}=s$,
so taking the limit as $t\to\infty$ gives $\ha{I}\leq s$. But we also saw
that $\alpha(I^{(m)})\geq \alpha(C^m)=m\alpha(C)=ms$,
so $\frac{\alpha(I^{(m)})}{m}\geq s$, hence also $\ha{I}\geq s$.
\end{proof}

\end{document}